\numberwithin{equation}{section}
\newtheorem{teo}{Theorem}[section]
\newtheorem{lem}[teo]{Lemma}
\theoremstyle{definition}
\newtheorem{defn}[teo]{Definition}
\newtheorem{rem}[teo]{Remark}
\def\a{\alpha}
\def\b{\beta}
\def\l{\lambda }
\def\g{\gamma }
\def\o{\omega}
\def\R{\mathbb{R}}
\def\N{\mathbb{N}}
\def\d{\delta}
\def\s{\sigma}
\begin{document}

\title[bounded $\Lambda$-variation and integral smoothness]{On functions of bounded $\Lambda$-variation and integral smoothness}
\author{M. Lind}

\address{Department of Mathematics,
Karlstad University,
Universitetsgatan 2,
651 88 Karlstad,
SWEDEN}

\begin{abstract}
We obtain a necessary and sufficient condition for embeddings of integral Lipschitz classes ${\rm Lip}(\a;p)$ into classes $\Lambda BV$ of functions of bounded $\Lambda$-variation.
\end{abstract}

\keywords{generalized bounded variation, $\Lambda$-variation, moduli of continuity, Lipschitz classes}
\subjclass[2010]{Primary 26A45; Secondary 46E35}

\maketitle

\section{Introduction}
Jordan's classical concept of bounded variation has been extended in several directions. One well-known generalization is the notion of functions of bounded $p$-variation, due to Wiener.

Waterman \cite{Wat1,Wat2} extended the class of functions of bounded variation in a different way.
Let $f$ be a 1-periodic function on the real line. For any interval $I=[a,b]$, we set $f(I)=f(b)-f(a)$.
Denote by $\mathcal{S}$ the collection of all positive and nondecreasing sequences $\Lambda=\{\l_n\}$ such that $\l_n\rightarrow\infty$ and
$$
\sum_{n=1}^\infty\frac{1}{\l_n}=\infty.
$$
Let $\Lambda=\{\l_n\}\in\mathcal{S}$, a function $f$ is said to be of {\it bounded $\Lambda$-variation} if
\begin{equation}
\nonumber
v_\Lambda(f)=\sup_{\mathcal{I}}\sum_{n=1}^\infty\frac{|f(I_n)|}{\l_n}<\infty,
\end{equation}
where the supremum is taken over all sequences $\mathcal{I}=\{I_n\}$ of nonoverlapping intervals contained in a period. The class of functions of bounded $\Lambda$-variation is denoted $\Lambda BV$. For a discussion on the origin of classes $\Lambda BV$, see \cite{Wat3}. Observe also that all functions of $\Lambda BV$ are bounded.

Denote by $L^p~~(1\le p<\infty)$ the class of all 1-periodic measurable functions $f$ such that
$$
\|f\|_p=\left(\int_0^1|f(x)|^pdx\right)^{1/p}<\infty.
$$
For $f\in L^p~~(1\le p<\infty)$, the {\it $L^p$-modulus of continuity} of $f$ is given by
$$
\o(f;\d)_p=\sup_{0\le h\le\d}\left(\int_0^1|f(x+h)-f(x)|^pdx\right)^{1/p}\quad(0\le\d\le1).
$$
Let $\o$ be any nondecreasing,
continuous and subadditive functions defined on $[0,1]$ with $\o(0)=0$.
For $1\le p<\infty$, set
$$
H_p^\o=\{f\in L^p:\o(f;\d)_p=O(\o(\d))\}.
$$
In the particular case $\o(\d)=\d^\a$, we denote $H_p^\o={\rm Lip}(a;p)$.

Relations between $H_p^\o$ and $\Lambda BV$ have attracted some interest in recent years. Kuprikov \cite{Ku1} obtained a sharp estimate of the $L^p$-modulus of continuity ($1\le p<\infty)$ of a function in terms of its $\Lambda$-variation (for $p=1$, such estimates were first obtained in \cite{ShWa} and \cite{SWa}). Afterwards, Goginava \cite{Go1} proved that Kuprikov's estimate leads to the necessary and sufficient condition for the embedding
$$
\Lambda BV\subset H_p^\o\quad (1\le p<\infty).
$$
This result was later generalized in \cite{HoLePrWi}.

H. Wang \cite{Wan1} studied the reverse embedding, that is 
\begin{equation}
\nonumber
H_p^\o\subset\Lambda BV\quad(1\le p<\infty).
\end{equation}
In particular, he observed that a necessary condition for the embedding
\begin{equation}
\label{Paper4Lipschitz}
{\rm Lip}(\a;p)\subset\Lambda BV\quad(1<p<\infty,\;1/p<\a<1)
\end{equation}
is
\begin{equation}
\label{Paper4LipschitzWrong}
\sum_{n=1}^\infty\left(\frac{1}{\l_n}\right)^{1/(1-\a)}<\infty.
\end{equation}
Wang then conjectured that (\ref{Paper4LipschitzWrong}) is also a sufficient  for (\ref{Paper4Lipschitz}) to hold.
We remark that the condition $\a>1/p$ in (\ref{Paper4Lipschitz}) is essential; for $\a\le1/p$, the class ${\rm Lip}(\a;p)$ contains unbounded functions and (\ref{Paper4Lipschitz}) cannot hold.

The main result of this note is the following. 
\begin{teo}
\label{MainTeoIntroduction}
Let $1<p<\infty$ and $1/p<\a<1$, and set
$$
r=\frac{1}{\a-1/p}\quad{\rm and}\quad r'=\frac{1}{1+1/p-\a}.
$$
Then the embedding (\ref{Paper4Lipschitz}) holds if and only if
\begin{equation}
\nonumber
\sum_{n=0}^\infty\left(\sum_{k=2^n}^{2^{n+1}}\left(\frac{1}{k^{\a-1/p}\l_k}\right)^{p'}\right)^{r'/p'}<\infty.
\end{equation}
\end{teo}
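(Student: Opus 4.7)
My plan is to prove both directions using a dyadic trigonometric polynomial decomposition of $f$. Let $V_Nf$ denote the de la Vall\'ee-Poussin sum of order $N$ and set $Q_n = V_{2^{n+1}}f - V_{2^n}f$ for $n\geq 0$, so that $f = V_1f + \sum_{n\geq 0} Q_n$. Each $Q_n$ is a trigonometric polynomial of degree $\leq 2^{n+2}$, and the assumption $f\in {\rm Lip}(\alpha;p)$ together with Jackson's theorem yields $\|Q_n\|_p \leq C\,\omega(f;2^{-n})_p \leq C\,2^{-n\alpha}$. Bernstein's inequality gives $\|Q_n'\|_p \leq C\,2^{n(1-\alpha)}$, and Nikolskii's inequality gives $\|Q_n\|_\infty \leq C\,2^{n/p}\|Q_n\|_p \leq C\,2^{-n/r}$, whence $\|Q_n'\|_\infty \leq C\,2^{n/r'}$. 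Consequently, for any interval $I$,
$$
|Q_n(I)| \leq C\min\bigl(2^{-n/r},\ |I|\cdot 2^{n/r'}\bigr),
$$
which is the basic pointwise estimate driving both directions.

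For sufficiency, given non-overlapping intervals $\{I_j\}$ I group them into dyadic length classes $\mathcal{I}_m = \{I_j : 2^{-m-1} < |I_j| \leq 2^{-m}\}$, with $\#\mathcal{I}_m = N_m \leq 2^{m+1}$. Reordering the sequence so that larger intervals come first only increases $v_\Lambda$ (by monotonicity of $\{\lambda_j\}$) and forces the indices of $I \in \mathcal{I}_m$ to lie in a range of size $N_m$ starting near position $2^m$. Inside each class I apply H\"older's inequality with conjugate exponents $p,p'$,
$$
\sum_{I \in \mathcal{I}_m}\frac{|f(I)|}{\lambda_{j(I)}} \leq \Bigl(\sum_{I \in \mathcal{I}_m}|f(I)|^p\Bigr)^{1/p}\Bigl(\sum_{I \in \mathcal{I}_m}\lambda_{j(I)}^{-p'}\Bigr)^{1/p'}.
$$
The second factor is dominated by $(\sum_{k=2^m}^{2^{m+1}}\lambda_k^{-p'})^{1/p'}$, producing the inner $\ell^{p'}$-norm of the statement. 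The first factor is controlled via $f=\sum_n Q_n$ and Minkowski in $\ell^p$, using $|Q_n(I)|^p \leq |I|^{p-1}\int_I|Q_n'|^p$ in the regime $n\leq m$ and $|Q_n(I)|\leq 2\|Q_n\|_\infty$ in the regime $n>m$; summing the resulting geometric series in $n$ introduces a factor $\sim 2^{-m/r}$, which supplies the missing $k^{-(\alpha-1/p)}$ weight. A final application of H\"older's inequality in the variable $m$ with conjugate exponents $r,r'$ converts these dyadic contributions into the double sum appearing in the theorem.

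For necessity, I would construct an extremal function scale by scale. For each dyadic block $n$, $\ell^p$--$\ell^{p'}$ duality supplies nonnegative coefficients $(a_k^{(n)})_{k=2^n}^{2^{n+1}}$ with $\|a^{(n)}\|_{\ell^p}\leq 1$ such that $\sum_k k^{-1/r}\lambda_k^{-1}a_k^{(n)}$ equals the inner $\ell^{p'}$-norm in the theorem. I build $f_n$ as a finite sum of bumps of width $\sim 2^{-n}$ and heights $\sim 2^{-n/r}a_k^{(n)}$ placed at well-separated points inside a subinterval $J_n\subset [0,1]$, so that $\|f_n\|_{{\rm Lip}(\alpha;p)} \lesssim 1$ while $v_\Lambda(f_n)$ matches the prescribed lower bound. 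With the $J_n$ pairwise disjoint and combined with weights $(b_n)$ dual to the outer $\ell^{r'}$-norm, $f = \sum_n b_n f_n$ belongs to ${\rm Lip}(\alpha;p)$ but has $v_\Lambda(f) = \infty$ whenever the stated sum diverges, so the embedding fails.

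The main obstacle is the two-level H\"older argument in the sufficiency step: precisely matching the reindexed intervals $j(I)$ to the dyadic block of $\lambda_k$, and cleanly handling the transition at $n\approx m$ between the derivative bound and the uniform bound on $Q_n$ when estimating the first H\"older factor. In the necessity direction, the delicate point is simultaneously controlling the $L^p$-modulus of each $f_n$ and the size of its $\Lambda$-variation, and then combining scales via the outer duality without inflating the Lipschitz norm.
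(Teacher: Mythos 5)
Your overall architecture (dyadic length-classes of intervals, H\"older with exponents $p,p'$ inside each class, an extremal bump construction for necessity) parallels the paper's, but the sufficiency direction as written has two gaps that sit exactly where the real work of the theorem lies. First, the claim that $\sum_{I\in\mathcal{I}_m}\lambda_{j(I)}^{-p'}$ is dominated by $\sum_{k=2^m}^{2^{m+1}}\lambda_k^{-p'}$ is false in general: after your reordering, the indices of the class-$m$ intervals start at position $1+\sum_{m'<m}N_{m'}$, which can be far smaller than $2^m$ when the earlier classes are sparse (e.g.\ $N_{m'}=1$ for all $m'<m$), and since $\{\lambda_j\}$ is nondecreasing, small indices make this factor \emph{larger}, not smaller. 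All one can assert is a bound by the initial-segment sum $\sum_{1\le j\le N_0+\cdots+N_m}\lambda_j^{-p'}$; converting such sums into dyadic-block sums is precisely the role of the Hardy-type inequality (Lemma \ref{Paper4LeindlerLemma}) in the paper. (Your reordering step is itself unjustified: the extremal pairing against an increasing $\{\lambda_j\}$ orders $|f(I)|$ decreasingly, not $|I|$.) Second, the closing ``H\"older in $m$ with exponents $r,r'$'' cannot produce the stated condition: the conjugate factor is $(\sum_m 1)^{1/r}=\infty$, so your argument would establish the embedding only under the strictly stronger hypothesis $\{L_m\}\in\ell^1$ (recall $r'>1$, so $\ell^1\subsetneq\ell^{r'}$). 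The missing ingredient is the summable weight $\delta_m=2^{-m}\,\mathrm{card}(\bigcup_{k\le m}\sigma_k)$ forced by $\sum_j|I_j|\le1$, which after regularization (Lemma \ref{ViktorJavierLemma}) supplies the $\ell^r$ sequence $\{\delta_m^{1/r}\}$ against which H\"older is applied. These two defects are coupled --- sparse classes hurt the $\lambda$-sum but shrink the weight --- and balancing them is the substance of the paper's proof of Theorem \ref{Paper4MainTeoLip}.

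There is also a quantitative error in the Littlewood--Paley estimate of the first H\"older factor: for $n>m$ your bound $\bigl(\sum_{I\in\mathcal{I}_m}|Q_n(I)|^p\bigr)^{1/p}\le 2N_m^{1/p}\|Q_n\|_\infty\lesssim 2^{m/p}2^{-n(\alpha-1/p)}$ sums over $n>m$ to about $2^{m/p}2^{-m(\alpha-1/p)}$, which exceeds the required $2^{-m(\alpha-1/p)}$ by the factor $2^{m/p}$; closing this would need a Marcinkiewicz--Zygmund-type discretization ($\sum_j|Q_n(x_j)|^p\lesssim 2^m\|Q_n\|_p^p$ for $2^{-m}$-separated nodes) rather than the crude count times $\|Q_n\|_\infty$. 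The paper sidesteps the whole issue by quoting Terehin's equivalence (\ref{Paper4LipschitzEquivalence1})--(\ref{Paper4LipschitzEquivalence2}), which gives $\bigl(\sum_{I\in\mathcal{I}_m}|f(I)|^p\bigr)^{1/p}\le\omega_{1-1/p}(f;2^{-m})\lesssim 2^{-m(\alpha-1/p)}$ directly. Your necessity sketch is essentially the paper's Theorem \ref{Paper4LipschitzSharp}, but note that verifying $\omega_{1-1/p}(g;\delta)=O(\delta^{\alpha-1/p})$ for the superposition of scales again requires regularizing the dual sequence (Lemma \ref{ViktorJavierLemma}) so that the contributions of the coarse and fine scales form geometric series; a bare duality choice of the weights $b_n$ does not guarantee this.
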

Observe that it follows from Theorem \ref{MainTeoIntroduction} that the conjecture of Wang is not true.

Our proof of Theorem \ref{MainTeoIntroduction} makes essential use of estimates of $L^p$-moduli of continuity in terms of moduli of continuity in the space $V_p$ of functions of bounded $p$-variation. 

In the final part of the paper, we show that $V_p$ can be expressed in terms of spaces $\Lambda BV$. For $p=1$, this result was obtained by Perlman \cite{Pe1}, who proved that
$$
V_1=\bigcap_{\Lambda\in\mathcal{S}}\Lambda BV.
$$
We prove that for $1<p<\infty$, there holds
\begin{equation}
\label{Paper4Perlman}
V_p=\bigcap_{\Lambda\in\mathcal{S}_{p'}}\Lambda BV,
\end{equation}
where $\Lambda\in\mathcal{S}_{p'}$ means that the sequence $\Lambda\in\mathcal{S}$ satisfies
$$
\sum_{n=1}^\infty\left(\frac{1}{\l_n}\right)^{p'}<\infty.
$$
In connection to (\ref{Paper4Perlman}), we mention that embeddings between $\Lambda BV$ and other spaces of functions of generalized bounded variation were previously studied in, e.g., \cite{Be1,Ber1, PeWa, PiVe1}.

\section{Auxiliary results}
Let $f$ be a 1-periodic function on the real line and $1\le p<\infty$. Below, $\mathcal{I}=\{I_n\}$ will always denote a sequence of nonoverlapping intervals contained in a period. Given any such $\mathcal{I}=\{I_n\}$, set
$$
v_p(f;\mathcal{I})=\left(\sum_{n=1}^\infty|f(I_n)|^p\right)^{1/p}.
$$
The function $f$ is said to be of \emph{bounded $p$-variation} if 
\begin{equation}
\nonumber
v_p(f)=\sup_{\mathcal{I}}v_p(f;\mathcal{I})<\infty,
\end{equation}
where the supremum is taken over all sequences $\mathcal{I}$. For $p=1$, this definition was given by Jordan, and for $p>1$ by Wiener \cite{Wi1}.

Given any $\mathcal{I}=\{I_n\}$, we also set
$\|\mathcal{I}\|=\sup_n|I_n|$, where $|I|$ denotes the length of the interval $I$. Following Terehin \cite{Te1}, we define the {\it modulus of $p$-continuity} for $1<p<\infty$ by
\begin{equation}
\nonumber
\o_{1-1/p}(f;\d)=\sup_{\|\mathcal{I}\|\le\d}v_p(f;\mathcal{I})\quad(0<\d\le1),
\end{equation}
where the supremum is taken over all $\mathcal{I}$ with $\|\mathcal{I}\|\le\d$. Note that $\o_{1-1/p}(f;1)=v_p(f)$, the $p$-variation of $f$.

For $p>1$, we may have that $\lim_{\d\rightarrow0}\o_{1-1/p}(f;\d)=0$
for nontrivial functions. Such functions are called {\it $p$-continuous}. 
It is not difficult to show that the best order of decay of the modulus of $p$-continuity is $\o_{1-1/p}(f;\d)=O(\d^{1/p'}),~~p'=p/(p-1)$. Moreover, this rate of decay is attained for functions in $W_p^1$, the class of 1-periodic absolutely continuous functions $f$ such that $f'\in L^p$. Indeed, it is a simple application of H\"{o}lder's inequality to show that
\begin{equation}
\label{Paper4ModIneq}
\o_{1-1/p}(f;\d)\le\|f'\|_p\d^{1/p'}\quad(0\le\d\le1).
\end{equation}
Conversely, it was shown in \cite{Te1} that if $f$ satisfies $\o_{1-1/p}(f;\d)=O(\d^{1/p'})$, then $f\in W_p^1$.

Let $1<p<\infty$ and $0<\a\le1$. Define
\begin{equation}
\label{Paper4LipschitzNorm}
\|f\|_{{\rm Lip}(\a;p)}=\sup_{\d>0}\frac{\o(f;\d)_p}{\d^\a},
\end{equation}

The next observation will be useful for us.
Let $1<p<\infty$ and $1/p<\a\le1$. Then a function $f\in{\rm Lip}(\a;p)$ can be modified on set of measure 0 to be continuous. 
Moreover, Terehin \cite[Corollary 1]{Te2} proved that there exists a constant $c_{p,\a}>0$ such that for the modified function $\bar{f}$, 
\begin{equation}
\label{Paper4LipschitzEquivalence1}
c_{p,\a}\sup_{\d>0}\frac{\o_{1-1/p}(\bar{f};\d)}{\d^{\a-1/p}}\le\|f\|_{{\rm Lip}(\a;p)}\le\sup_{\d>0}\frac{\o_{1-1/p}(\bar{f};\d)}{\d^{\a-1/p}}.
\end{equation}
Thus, if $p>1$, $1/p<\a\le1$ and $f$ is a continuous 1-periodic function, then
\begin{equation}
\label{Paper4LipschitzEquivalence2}
\o(f;\d)_p=O(\d^\a)\quad{\rm if\;\;and\;\;only\;\;if}\quad\o_{1-1/p}(f;\d)=O(\d^{\a-1/p}).
\end{equation}

We shall also use the following construction.
\begin{defn}
\label{constrDef}
Let $I=[a,b]\subset[0,1]$ be an interval, $N\in\N$ and ${\bf H}=(H_0,H_1,...,H_{N-1})\in\R^N$ be a vector with $H_j\ge0$ for $0\le j\le N-1$. Set $h=(b-a)/N$, $\xi_j=a+jh~~(j=0,1,...,N)$ and $\xi^*_j=a+(j+1/2)h~~(j=0,1,...,N-1)$.
The function $F(x)=F(I,N,{\bf H};x)$ is defined to be the continuous 1-periodic function such that $F(x)=0$ for $x\in[0,1]\setminus I$, $F(\xi_j)=0~~(j=0,1,...,N)$, $F(\xi_j^*)=H_j~~(j=0,1,...,N-1)$, and $F$ is linear on each of the intervals $[\xi_j,\xi_j^*]$ and $[\xi_j^*,\xi_{j+1}]~~(j=0,1,...,N-1)$.
\end{defn}
Thus, the graph of $F$ consists of $N$ isosceles triangles of heights $H_j~~(j=0,...,N-1)$ and bases $h$. It is easy to see that
\begin{equation}
\label{triangle-pvar}
v_p(F)=2^{1/p}\left(\sum_{j=0}^{N-1}H_j^p\right)^{1/p}\quad (1\le p<\infty),
\end{equation}
and
\begin{equation}
\label{triangle-pnorm}
\|F'\|_p=2h^{-1/p'}\left(\sum_{j=0}^{N-1}H_j^p\right)^{1/p}\quad(1\le p<\infty).
\end{equation}

The next lemma is of a known type (cf. \cite{LEP}). In particular, it can be proved in the same way as Lemma 2.4 in \cite{KoLa}.
\begin{lem}
\label{ViktorJavierLemma}
Let $\{\a_k\}\in l^1$ be a sequence of non-negative numbers and let $\theta>1$ and $\gamma>0$. There exists a sequence $\{\b_k\}$ of positive numbers such that
\begin{equation}
\nonumber
\a_k\le\b_k,\quad k\in\N,
\end{equation}
\begin{equation}
\nonumber
\sum_{k=1}^\infty\b_k\le\frac{\theta^{1+\gamma}}{(\theta-1)(\theta^\gamma-1)}\sum_{k=1}^\infty\a_k,
\end{equation}
and
\begin{equation}
\nonumber
\theta^{-\gamma}\le\frac{\b_{k+1}}{\b_k}\le\theta,\quad k\in\N.
\end{equation}
\end{lem}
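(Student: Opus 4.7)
The plan is to build $\{\beta_k\}$ as an upper envelope of $\{\alpha_k\}$ smoothed by a two-sided geometric kernel, with exponents chosen to match the prescribed ratio range $[\theta^{-\gamma},\theta]$.

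Specifically, I would define the weight
\[
\psi_m = \begin{cases} \theta^m, & m \le 0, \\ \theta^{-\gamma m}, & m \ge 0, \end{cases}
\]
and set $\beta_k = \sup_{j \ge 1}\alpha_j\,\psi_{k-j}$. The motivation is this: any admissible $\{\beta_k\}$ must satisfy $\beta_j\ge\alpha_j$, and iterating the bound $\theta^{-\gamma}\le\beta_{k+1}/\beta_k\le\theta$ in both directions shows $\beta_k \ge \psi_{k-j}\beta_j$ for all $j$ and $k$. Hence $\alpha_j\,\psi_{k-j}$ is the minimal value forced on $\beta_k$ by the datum at $j$, and the supremum over $j$ is the pointwise smallest admissible majorant.

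Verifying the three conclusions is then routine. Taking $j=k$ gives $\beta_k\ge\alpha_k\psi_0=\alpha_k$. The one-step ratio $\psi_{m+1}/\psi_m$ equals $\theta^{-\gamma}$ for $m\ge 0$ and $\theta$ for $m\le -1$, so it always lies in $[\theta^{-\gamma},\theta]$; termwise comparison of $\alpha_j\psi_{k+1-j}$ with $\alpha_j\psi_{k-j}$ then transfers the two-sided bound to the suprema, yielding $\theta^{-\gamma}\beta_k\le\beta_{k+1}\le\theta\beta_k$. For the $\ell^1$ estimate I would use $\sup\le\sum$ on nonnegative terms, interchange the order of summation, and evaluate the two-sided geometric sum:
\[
\sum_{k=1}^\infty\beta_k \le \sum_{j=1}^\infty\alpha_j \sum_{m\in\Z}\psi_m, \qquad \sum_{m\in\Z}\psi_m = \frac{1}{\theta-1}+\frac{\theta^\gamma}{\theta^\gamma-1}=\frac{\theta^{\gamma+1}-1}{(\theta-1)(\theta^\gamma-1)},
\]
and this last quantity is bounded above by the stated constant $\theta^{1+\gamma}/[(\theta-1)(\theta^\gamma-1)]$.

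There is no real obstacle; this is a standard smoothing/envelope construction, and as the author notes it is of the same type as Lemma 2.4 in \cite{KoLa}. The only points needing care are the asymmetric choice of exponents in $\psi_m$ (exponent $1$ on the left and $\gamma$ on the right, mirroring the asymmetry of the upper bound $\theta$ and the lower bound $\theta^{-\gamma}$ on the ratio), and the trivial degenerate case $\alpha_k\equiv 0$, which may be excluded or handled by an infinitesimal perturbation before taking the construction above.
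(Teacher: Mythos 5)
Your construction is correct, and since the paper gives no proof of this lemma itself (it defers to Lemma 2.4 of \cite{KoLa}), your two-sided geometric envelope $\b_k=\sup_{j}\a_j\psi_{k-j}$ is precisely the standard argument behind the cited result: all three verifications (majorization at $j=k$, transfer of the one-step ratio bounds $\theta^{-\g}\le\psi_{m+1}/\psi_m\le\theta$ to the suprema, and the Fubini computation $\sum_{m\in\Z}\psi_m=(\theta^{1+\g}-1)/((\theta-1)(\theta^\g-1))\le\theta^{1+\g}/((\theta-1)(\theta^\g-1))$) check out. The one caveat --- that the statement fails for $\a_k\equiv0$, where positivity of $\b_k$ is incompatible with the $\ell^1$ bound --- you have already flagged, and it is harmless in the paper's applications since the sequences $\{\d_n\}$ to which the lemma is applied are not identically zero.
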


We shall also use the following Hardy-type inequality (see \cite{Le1}).
\begin{lem}
\label{Paper4LeindlerLemma}
Let $\b>0$ and $1<r<\infty$ be fixed. Let $\{a_k\}$ be a sequence of nonnegative real numbers, and $\{\nu_n\}$ an increasing sequence of positive real numbers with $\nu_0=1$.
Then there exists a constant $c_{\b,r}>0$ such that
\begin{equation}
\label{Paper4Hardy}
\sum_{n=0}^\infty2^{-n\b}\left(\sum_{1\le k\le\nu_n}a_k\right)^{1/r}\le c_{\b,r}\sum_{n=1}^\infty2^{-n\b}\left(\sum_{\nu_{n-1}\le k\le\nu_n}a_k\right)^{1/r}.
\end{equation}
\end{lem}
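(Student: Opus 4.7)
\textbf{Proof plan for Lemma \ref{Paper4LeindlerLemma}.} The statement is a Hardy-type inequality in which the left-hand side has partial sums $\sum_{k=1}^{\nu_n} a_k$ and the right-hand side involves only the ``dyadic block'' sums $\sum_{\nu_{n-1}\le k\le\nu_n}a_k$. The key structural feature is that $1<r<\infty$, so that $0<1/r<1$, which makes the function $t\mapsto t^{1/r}$ subadditive on $[0,\infty)$. I would exploit this together with an interchange of summation in the style of Abel.

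More concretely, the plan is as follows. Set
$$
A_j=\sum_{\nu_{j-1}\le k\le\nu_j}a_k\quad (j\ge1),
$$
so that (allowing the mild double-counting at the endpoints $k=\nu_j$, which only strengthens the estimate)
$$
\sum_{1\le k\le\nu_n}a_k\le\sum_{j=1}^nA_j\quad (n\ge1).
$$
The subadditivity of $t\mapsto t^{1/r}$ then gives
$$
\left(\sum_{1\le k\le\nu_n}a_k\right)^{1/r}\le\sum_{j=1}^nA_j^{1/r}.
$$
Substituting this into the left-hand side of \eqref{Paper4Hardy} (and treating the $n=0$ term separately, using $\nu_0=1$, so that $(\sum_{k=1}^{\nu_0}a_k)^{1/r}=a_1^{1/r}\le A_1^{1/r}$) and interchanging the order of summation yields
$$
\sum_{n=0}^\infty 2^{-n\b}\left(\sum_{1\le k\le\nu_n}a_k\right)^{1/r}\le A_1^{1/r}+\sum_{j=1}^\infty A_j^{1/r}\sum_{n=j}^\infty 2^{-n\b}.
$$
The inner geometric series equals $2^{-j\b}/(1-2^{-\b})$, and collecting constants gives the desired bound with $c_{\b,r}=C_\b:=(1-2^{-\b})^{-1}$ plus a bounded correction from the $n=0$ term.

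There is really no hard step here: the only minor points to watch are the boundary convention (the $n=0$ case with $\nu_0=1$, and the harmless overlap at $k=\nu_j$ between consecutive blocks) and making sure the subadditivity is applied in the correct direction (it is legitimate precisely because $1/r\le 1$, i.e.\ because $r\ge 1$). Since no sharpness in the constant $c_{\b,r}$ is claimed, the above routine computation suffices, and the main conceptual content is simply: subadditivity of $t^{1/r}$ plus Fubini on a geometric tail.
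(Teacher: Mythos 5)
Your proof is correct: the paper does not prove this lemma at all (it simply cites Leindler's paper \cite{Le1}), and your elementary argument --- the subadditivity of $t\mapsto t^{1/r}$ for $1/r\le 1$ applied to the block decomposition $\sum_{1\le k\le\nu_n}a_k\le\sum_{j=1}^nA_j$, followed by an interchange of summation against the geometric tail $\sum_{n\ge j}2^{-n\beta}=2^{-j\beta}/(1-2^{-\beta})$ --- is exactly the standard route and closes completely, including the $n=0$ boundary term, which is absorbed via $a_1^{1/r}\le A_1^{1/r}=2^{\beta}\cdot 2^{-\beta}A_1^{1/r}$. The resulting constant $c_{\beta,r}=2^{\beta}+(1-2^{-\beta})^{-1}$ depends only on $\beta$, which is more than the lemma requires.
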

Finally, we formulate the next well-known result (see, e.g., \cite[Ch.6]{Fo}).
\begin{lem}
\label{dualLemma}
Let $1<p<\infty$. Then $\{x_n\}\in l^p$ if and only if
$$
\sum_{n=1}^\infty \a_nx_n<\infty,
$$
for all $\{\a_n\}\in l^{p'}$. Moreover,
$$
\sup_{\|\{\a_n\}\|_{p'}\le1}\sum_{n=1}^\infty\a_nx_n=\|\{x_n\}\|_p.
$$
\end{lem}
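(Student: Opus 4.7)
The plan is to deduce both assertions from H\"{o}lder's inequality combined with an explicit extremizing sequence, which is the standard route to $\ell^p$--$\ell^{p'}$ duality and is the approach that Folland carries out.

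I would first dispense with the easy direction: if $\{x_n\}\in l^p$, H\"{o}lder's inequality yields
$$
\sum_{n=1}^\infty|\a_n x_n|\le\|\{\a_n\}\|_{p'}\|\{x_n\}\|_p
$$
for every $\{\a_n\}\in l^{p'}$. This gives absolute convergence of the series and the upper bound $\sup\le\|\{x_n\}\|_p$ in the supremum identity. For the matching lower bound I would produce an extremizer: assuming $\|\{x_n\}\|_p>0$, set
$$
\a_n^*=\frac{\operatorname{sgn}(x_n)|x_n|^{p-1}}{\|\{x_n\}\|_p^{p-1}},
$$
so that the identity $(p-1)p'=p$ gives $\|\{\a_n^*\}\|_{p'}=1$ and $\sum_n\a_n^*x_n=\|\{x_n\}\|_p$. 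This closes the supremum identity.

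For the remaining converse --- that summability of $\sum\a_n x_n$ for every $\{\a_n\}\in l^{p'}$ forces $\{x_n\}\in l^p$ --- I would argue by contrapositive. Apply the extremizer construction above to each truncation $(x_1,\dots,x_N,0,0,\dots)$ to obtain unit-norm $l^{p'}$ sequences $\b^{(N)}$ with $\sum_n\b_n^{(N)}x_n=\bigl(\sum_{n\le N}|x_n|^p\bigr)^{1/p}$. If $\|\{x_n\}\|_p=\infty$ these partial norms diverge, so the partial-sum functionals $T_N(\a)=\sum_{n\le N}\a_n x_n$ on $l^{p'}$ are pointwise bounded on $l^{p'}$ (by hypothesis) but have unbounded operator norms, contradicting the uniform boundedness principle. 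Alternatively, one can glue the $\b^{(N)}$ directly on disjoint index blocks with geometrically decaying scalar weights to produce a single $\{\a_n\}\in l^{p'}$ for which the series diverges.

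The main obstacle is conceptually minor: one needs either the uniform boundedness principle or the block-gluing construction to convert the sequence of finite-support extremizers into a single witness $\{\a_n\}$, and one must handle the null entries $x_n=0$ carefully in the formula for $\a_n^*$ (interpreting $\operatorname{sgn}(0)|0|^{p-1}$ as $0$) so that the extremizer is unambiguously defined. All remaining steps are purely computational applications of H\"{o}lder's inequality and the exponent identity $(p-1)p'=p$.
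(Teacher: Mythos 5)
Your argument is correct, but note that the paper does not prove this lemma at all: it is stated as a well-known fact with a reference to Folland, Ch.~6, and your proof (H\"older's inequality plus the explicit extremizer $\alpha_n^*=\operatorname{sgn}(x_n)|x_n|^{p-1}/\|\{x_n\}\|_p^{p-1}$ for the supremum identity, and uniform boundedness or a gliding-hump construction for the converse) is exactly the standard argument carried out in that reference. The only caveat worth recording is the one you already flag: the hypothesis ``$\sum\alpha_nx_n<\infty$'' should be read with absolute values (or for nonnegative sequences, which is how the lemma is applied in the paper), and the extremizer must be set to $0$ where $x_n=0$.
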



\section{Embedding of Lipschitz classes}
We shall now prove our main results. Recall that $\|f\|_{{\rm Lip}(\a;p)}$ is given by (\ref{Paper4LipschitzNorm}).

\begin{teo}
\label{Paper4MainTeoLip}
Let $\Lambda\in\mathcal{S}$ be given and $1<p<\infty,~~1/p<\a<1$. Set
\begin{equation}
\label{Paper4r-parameter}
r=\frac{1}{\a-1/p}\quad{\rm and}\quad r'=\frac{1}{1+1/p-\a}.
\end{equation}
There exists a constant $c_{p,\a}>0$ depending only on $\a$ and $p$ such that for any $f\in{\rm Lip}(\a;p)$, 
\begin{equation}
\label{Paper4LipschitzEstim}
v_\Lambda(f)\le c_{p,\a}\|f\|_{{\rm Lip}(\a;p)}\left(\sum_{n=0}^\infty\left(\sum_{k=2^n}^{2^{n+1}}\left(\frac{1}{k^{\a-1/p}\l_k}\right)^{p'}\right)^{r'/p'}\right)^{1/r'}.
\end{equation}
\end{teo}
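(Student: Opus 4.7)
The plan is to reduce the estimate on $v_\Lambda(f)$ to an estimate on a single decreasing sequence obtained by rearranging the values $|f(I_n)|$, exploit the modulus of $p$-continuity via Terehin's equivalence \eqref{Paper4LipschitzEquivalence1}, decompose the indices dyadically, apply H\"older in each block, and then combine the block estimates using the auxiliary lemmas \ref{ViktorJavierLemma} and \ref{Paper4LeindlerLemma} to produce the outer $\ell^{r'}$--norm.

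First I would fix a sequence $\mathcal{I}=\{I_n\}$ of nonoverlapping intervals and, by the classical rearrangement inequality, assume without loss of generality that $|f(I_n)|$ is nonincreasing in $n$ (this is the labelling that maximises $\sum_n|f(I_n)|/\lambda_n$, since $\{1/\lambda_n\}$ is nonincreasing). Writing $a_n=|f(I_n)|$, the crucial observation is that the \emph{same} set of intervals, reordered by decreasing length, satisfies $|I_n|\le 1/n$; since the decreasing rearrangement minimises tail $\ell^p$--sums, Terehin's bound gives the structural estimate
\[
\Bigl(\sum_{n>N}a_n^p\Bigr)^{1/p}\le\omega_{1-1/p}(f;1/(N+1))\le c_{p,\alpha}\|f\|_{{\rm Lip}(\alpha;p)}(N+1)^{-1/r}.
\]
Next, partition the indices into dyadic blocks $B_m=[2^m,2^{m+1})$ and apply H\"older's inequality with exponents $(p,p')$ on each block to obtain
\[
\sum_{n\in B_m}\frac{a_n}{\lambda_n}\le\Bigl(\sum_{n\in B_m}a_n^p\Bigr)^{1/p}\Bigl(\sum_{n\in B_m}\lambda_n^{-p'}\Bigr)^{1/p'}\le c_{p,\alpha}\|f\|_{{\rm Lip}(\alpha;p)}\,2^{-m/r}A_m,
\]
where $A_m=(\sum_{n\in B_m}\lambda_n^{-p'})^{1/p'}$. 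Because $k^{-1/r}\asymp 2^{-m/r}$ on $B_m$, the right side is comparable to $c\|f\|\bigl(\sum_{k\in B_m}(k^{-(\alpha-1/p)}/\lambda_k)^{p'}\bigr)^{1/p'}$, which is precisely the inner quantity appearing in \eqref{Paper4LipschitzEstim}.

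The hardest step, which I expect to be the chief obstacle, is assembling the block estimates into the outer $\ell^{r'}$ norm: summing them naively produces only the $\ell^{1}$-norm of $\{2^{-m/r}A_m\}$, and since $\|\cdot\|_{\ell^{r'}}\le\|\cdot\|_{\ell^{1}}$ on counting measure the $\ell^{r'}$ bound is strictly stronger and cannot be read off from the $\ell^{1}$ one without additional structure. To surmount this, I would apply Lemma \ref{ViktorJavierLemma} to the sequence $\{A_m^{p'}\}$ (suitably truncated so that it belongs to $\ell^1$) in order to replace it by a majorant with controlled ratios $\theta^{-\gamma}\le A_{m+1}/A_m\le\theta$; once the sequence is quasi-geometric, block sums, partial sums and tail sums become comparable, so the Leindler-type Hardy inequality of Lemma \ref{Paper4LeindlerLemma}, applied with $\beta=1/r$, $\rho=p'$ and $\nu_n=2^{n+1}$, can be used to convert the $\ell^{1}$ combination $\sum_m2^{-m/r}A_m$ into the weighted $\ell^{r'}$ combination $(\sum_m(2^{-m/r}A_m)^{r'})^{1/r'}$. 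Re-expressing $2^{-m/r}$ via $k^{-1/r}$ on $B_m$ then yields exactly the right-hand side of \eqref{Paper4LipschitzEstim}. The main technical point is to verify that the smoothing constants from Lemma \ref{ViktorJavierLemma} and the Hardy constant from Lemma \ref{Paper4LeindlerLemma} depend only on $p$ and $\alpha$, so that the final constant $c_{p,\alpha}$ has the required form.
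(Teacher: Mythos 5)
Your reduction and your block--H\"older step are correct as inequalities, but the final assembly cannot be made to work, and the obstruction is not merely technical. Once you pass to the decreasing rearrangement and retain only the tail estimate $(\sum_{n>N}a_n^p)^{1/p}\le c\|f\|(N+1)^{-1/r}$, the best conclusion available is $V\le c\|f\|\sum_m 2^{-m/r}A_m$, the outer $\ell^1$ combination, and this is genuinely all that the retained information yields: the sequence $a_n=n^{-\alpha}$ satisfies your tail estimate, and for $\lambda_k=k^{1-\alpha}$ (suitably truncated at $k=2^M$ so that all sums are finite) one gets $\sum_{m\le M}2^{-m/r}A_m\asymp M$ while the right-hand side of (\ref{Paper4LipschitzEstim}) is $\asymp M^{1/r'}$ with $r'>1$. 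So no function whose values realize this extremal sequence exists, but your intermediate bound cannot see that. The two lemmas you invoke cannot close this gap: Lemma \ref{ViktorJavierLemma} produces a \emph{majorant} with ratios merely bounded between $\theta^{-\gamma}$ and $\theta$, and such a sequence need not decay at all (a constant finite sequence qualifies), so its $\ell^1$ and $\ell^{r'}$ norms remain incomparable; and Lemma \ref{Paper4LeindlerLemma} replaces partial sums by block sums \emph{inside} an outer weighted $\ell^1$ sum --- both sides of (\ref{Paper4Hardy}) are outer $\ell^1$ sums over $n$ --- so it never upgrades an outer $\ell^1$ norm to an outer $\ell^{r'}$ norm.

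What your rearrangement discards is the geometric constraint $\sum_j|I_j|\le1$, and that is exactly the missing ingredient. The paper groups the intervals by length, $\sigma_k(\mathcal{I})=\{j:2^{-k-1}<|I_j|\le2^{-k}\}$, and records the cumulative counting function ${\rm card}(\bigcup_{i\le n}\sigma_i(\mathcal{I}))=2^n\delta_n$; the length constraint forces $\|\{\delta_n\}\|_{\ell^1}\le4$. Lemma \ref{ViktorJavierLemma} is applied to $\{\delta_n\}$ (not to the $\lambda$-block sums), giving a quasi-geometric majorant $\{\beta_k\}$ with $\|\{\beta_k\}\|_{\ell^1}\le64$, and after Leindler's inequality converts partial sums $\sum_{j\le\nu_k}\lambda_j^{-p'}$ into block sums over $[\nu_{k-1},\nu_k]$ with $\nu_k=2^k\beta_k$, the decisive step is to write $2^{-k(\alpha-1/p)}=(\beta_k/\nu_k)^{1/r}$ and apply H\"older with exponents $(r,r')$: the factor $\|\{\beta_k^{1/r}\}\|_{\ell^r}=\|\{\beta_k\}\|_{\ell^1}^{1/r}$ is an absolute constant \emph{because of the length constraint}, and the conjugate factor is precisely the $\ell^{r'}$ expression you need. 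That is the mechanism by which $\ell^1$ becomes $\ell^{r'}$; to salvage your outline you would have to keep track of how many intervals live at each length scale rather than compressing everything into a single tail estimate for the rearranged values.
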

\begin{proof}
In light of (\ref{Paper4LipschitzEquivalence1}), we may without loss of generality assume that
\begin{equation}
\label{Paper4LipschitzNorm2}
\sup_{\d>0}\frac{\o_{1-1/p}(f;\d)}{\d^{\a-1/p}}=1.
\end{equation}
Take an arbitrary sequence $\mathcal{I}=\{I_j\}$ of nonoverlapping intervals contained in a period. Denote 
$$
\s_k(\mathcal{I})=\{j:2^{-k-1}<|I_j|\le2^{-k}\}\quad(k\ge0).
$$
Then we have
$$
V=\sum_{j=1}^\infty\frac{|f(I_j)|}{\l_j}=\sum_{k=0}^\infty\sum_{j\in\s_k(\mathcal{I})}\frac{|f(I_j)|}{\l_j}.
$$
We shall estimate $V$. By H\"{o}lder's inequality, we have
\begin{eqnarray}
\nonumber
V&\le&\sum_{k=0}^\infty\left(\sum_{j\in\s_k(\mathcal{I})}|f(I_j)|^p\right)^{1/p}\left(\sum_{j\in\s_k(\mathcal{I})}\left(\frac{1}{\l_j}\right)^{p'}\right)^{1/p'}\\
\label{Paper4RoughEstim}
&\le&\sum_{k=0}^\infty\o_{1-1/p}(f;2^{-k})\left(\sum_{j\in\s_k(\mathcal{I})}\left(\frac{1}{\l_j}\right)^{p'}\right)^{1/p'}.
\end{eqnarray}
Thus, by (\ref{Paper4LipschitzNorm2}) and (\ref{Paper4RoughEstim})
\begin{equation}
\label{LipschitzRough}
V\le\sum_{k=0}^\infty2^{-k(\a-1/p)}\left(\sum_{j\in\s_k(\mathcal{I})}\left(\frac{1}{\l_j}\right)^{p'}\right)^{1/p'}.
\end{equation}
Let the sequence $\{\d_n\}$ be defined by
$$
{\rm card}\left(\bigcup_{k=0}^n\s_k(\mathcal{I})\right)=2^n\d_n,
$$
where, ${\rm card}(A)$ denotes the number of elements of the finite set $A$. Set also $\d_{-1}=0$.
There exists an $n_0\ge0$ such that $\d_n>0$ for all $n\ge n_0$, and we may assume $n_0=0$. We observe that $\|\{\d_n\}\|_{l^1}\le4$. Indeed, first note that
$$
\sum_{k=0}^\infty2^{-k}{\rm card}(\s_k(\mathcal{I}))\le2\sum_{j=0}^\infty|I_j|\le2.
$$
On the other hand, for $n\ge0$, we have
$$
2^{-n}{\rm card}(\s_n(\mathcal{I}))=\d_n-\d_{n-1}/2.
$$
Whence, for any $N\in\N$, we have
$$
\sum_{n=0}^{N+1}(\d_n-\d_{n-1}/2)=\d_{N+1}+\frac{1}{2}\sum_{n=0}^N\d_n\ge\frac{1}{2}\sum_{n=0}^N\d_n,
$$
and consequently, $\|\{\d_n\}\|_{l^1}\le4$.

Applying Lemma \ref{ViktorJavierLemma} with $\theta=2$ and $\gamma=1/2$ to $\{\d_k\}$ yields a sequence $\{\b_k\}$ such that $\d_k\le\b_k$,
\begin{equation}
\label{quotientEq1}
2^{-1/2}\le\frac{\b_{k+1}}{\b_k}\le2\quad(k\in\N)\quad{\rm and}\quad\|\{\b_k\}\|_{l^1}\le 64.
\end{equation}
Set $\nu_k=2^k\b_k$. By the first relation of (\ref{quotientEq1}), we have
\begin{equation}
\label{quotientEq2}
2\nu_k\le\nu_{k+2}\le 16\nu_k\quad (k\in\N).
\end{equation}
Since ${\rm card}(\s_k(\mathcal{I}))\le 2^k\d_k=\nu_k$, and $\{\l_j\}$ is increasing, we have by (\ref{LipschitzRough})
\begin{equation}
\nonumber
V\le\sum_{k=0}^\infty2^{-k(\a-1/p)}\left(\sum_{1\le j\le\nu_k}\left(\frac{1}{\l_j}\right)^{p'}\right)^{1/p'}.
\end{equation}
Applying (\ref{Paper4Hardy}) to the right-hand side of the previous inequality, we get
$$
V\le c_{p,\a}\sum_{k=1}^\infty2^{-k(\a-1/p)}\left(\sum_{\nu_{k-1}\le j\le\nu_k}\left(\frac{1}{\l_j}\right)^{p'}\right)^{1/p'},
$$
for some constant $c_{p,\a}>0$. Since $2^{-k}=\b_k/\nu_k$, we have
\begin{equation}
\label{Paper4BeforeHolder}
V\le c_{p,\a}\sum_{k=1}^\infty\left(\frac{\b_k}{\nu_k}\right)^{\a-1/p}\left(\sum_{\nu_{k-1}\le j\le\nu_k}\left(\frac{1}{\lambda_j}\right)^{p'}\right)^{1/p'}.
\end{equation}
By using H\"{o}lder's inequality with exponents $r$ and $r'$, and the second inequality of (\ref{quotientEq1}), we estimate the right-hand side of (\ref{Paper4BeforeHolder})
\begin{eqnarray}
\nonumber
V&\le& c_{p,\a}\|\{\b_k^{1/r}\}\|_{l^r}\left(\sum_{k=1}^\infty\nu_k^{-r'(\a-1/p)}\left(\sum_{\nu_{k-1}\le j\le\nu_k}\left(\frac{1}{\l_j}\right)^{p'}\right)^{r'/p'}\right)^{1/r'}\\
\label{Paper4BeforeGrouping}
&\le&64c_{p,\a}\left(\sum_{k=1}^\infty\left(\sum_{\nu_{k-1}\le j\le\nu_k}\left(\frac{1}{j^{\a-1/p}\l_j}\right)^{p'}\right)^{r'/p'}\right)^{1/r'}.
\end{eqnarray}
By collecting the terms of the sum at the right-hand side of (\ref{Paper4BeforeGrouping}) in pairs, and using that $a^q+b^q\le2(a+b)^q$ for any $q\ge0$ and $a,b\ge0$, we get
\begin{equation}
\label{Paper4AfterGrouping}
V\le 128c_{p,\a}\left(\sum_{k=0}^\infty\left(\sum_{\nu_{2k}\le j\le\nu_{2k+2}}\left(\frac{1}{j^{\a-1/p}\l_j}\right)^{p'}\right)^{r'/p'}\right)^{1/r'}.
\end{equation}
For $k\ge0$, we define $m_k\ge0$ as the greatest integer $m$ such that
$$
2^m<\nu_{2k}.
$$
By (\ref{quotientEq2}), we have $2\nu_{2k}\le\nu_{2k+2}$, and thus,
$$
2^{m_k+1}<\nu_{2k+2}.
$$
Consequently,
\begin{equation}
\label{mkEstim}
m_{k+1}\ge m_k+1\quad(k\ge0).
\end{equation}
Further, by (\ref{quotientEq2}), we have $\nu_{2k+2}\le16\nu_{2k}$. Therefore, for all $k\ge0$,
$$
[\nu_{2k},\nu_{2k+2}]\subset [2^{m_k},2^{m_k+5}].
$$
Whence,
$$
\sum_{\nu_{2k}\le j\le\nu_{2k+2}}\left(\frac{1}{j^{\a-1/p}\l_j}\right)^{p'}\le\sum_{j=2^{m_k}}^{2^{m_k+5}}\left(\frac{1}{j^{\a-1/p}\l_j}\right)^{p'}.
$$
Since the terms of the previous sum decrease, it follows that
$$
\sum_{j=2^{m_k}}^{2^{m_k+5}}\left(\frac{1}{j^{\a-1/p}\l_j}\right)^{p'}\le 40\sum_{j=2^{m_k}}^{2^{m_k+1}}\left(\frac{1}{j^{\a-1/p}\l_j}\right)^{p'},
$$
Consequently, by the previous inequality and (\ref{Paper4AfterGrouping}),
$$
V\le c'_{p,\a}\left(\sum_{k=0}^\infty\left(\sum_{j=2^{m_k}}^{2^{m_k+1}}\left(\frac{1}{j^{\a-1/p}\l_j}\right)^{p'}\right)^{r'/p'}\right)^{1/r'},
$$
for some $c_{p,\a}'>0$. By (\ref{mkEstim}), for each $k\ge0$, the intersection of $[2^{m_k},2^{m_k+1}]$ and $[2^{m_{k+1}},2^{m_{k+1}+1}]$ consists of at most one point.
Hence,
$$
V\le c_{p,\a}'\left(\sum_{n=0}^\infty\left(\sum_{j=2^n}^{2^{n+1}}\left(\frac{1}{j^{\a-1/p}\l_j}\right)^{p'}\right)^{r'/p'}\right)^{1/r'}.
$$
This proves (\ref{Paper4LipschitzEstim}).
\end{proof}

The estimate (\ref{Paper4LipschitzEstim}) is sharp in a sense. Namely, we have the following result.
\begin{teo}
\label{Paper4LipschitzSharp}
Let $\Lambda\in\mathcal{S}$ be given, $1<p<\infty,~~1/p<\a<1$ and $r,r'$ be defined by (\ref{Paper4r-parameter}).
Then there exists a function $g$ and constants $c'_{p,\a},c''_{p,\a}>0$ depending only on $\a$ and $p$ such that
\begin{equation}
\label{lipConveq1}
\o_{1-1/p}(g;\d)\le c'_{p,\a}\d^{\a-1/p}\quad(0<\d\le1),
\end{equation}
and
\begin{equation}
\label{lipConveq2}
v_\Lambda(g)\ge c''_{p,\a}\left(\sum_{n=1}^\infty\left(\sum_{k=2^n}^{2^{n+1}}\left(\frac{1}{k^{\a-1/p}\l_k}\right)^{p'}\right)^{r'/p'}\right)^{1/r'}.
\end{equation}
\end{teo}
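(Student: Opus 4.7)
The strategy is to construct $g = \sum_n F_n$ as a sum of building blocks $F_n = F(I_n, 2^n, \mathbf{H}^{(n)})$ from Definition~\ref{constrDef}, supported on disjoint subintervals $I_n \subset [0,1]$, with the heights chosen to equalize the inner H\"older inequality within each block (as used in the proof of Theorem~\ref{Paper4MainTeoLip}) and with the block lengths chosen to equalize the outer H\"older with conjugate exponents $r, r'$. Writing $S_n := \sum_{k=2^n}^{2^{n+1}-1} \lambda_k^{-p'}$ and $u_n := 2^{-n(\alpha-1/p)} S_n^{1/p'}$, the RHS of (\ref{lipConveq2}) is, up to constants, the norm $\|u\|_{\ell^{r'}}$, which we may assume finite.

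Set $\varepsilon_n := u_n^{r'}/\|u\|_{r'}^{r'}$ (so $\sum_n \varepsilon_n = 1$) and apply Lemma~\ref{ViktorJavierLemma} to $\{\varepsilon_n\}$ to obtain a majorant $\tilde\varepsilon_n \geq \varepsilon_n$ with $\sum_n \tilde\varepsilon_n \leq 1$ (after rescaling) and with bounded consecutive ratios. Choose disjoint $I_n \subset [0,1]$ of length $\ell_n := \tilde\varepsilon_n$, set $h_n := \ell_n/2^n$, and define
\[
H_j^{(n)} := A_n\,\lambda_{2^n+j}^{-p'/p}, \qquad A_n := c\,h_n^{\alpha-1/p} S_n^{-1/p}, \qquad j = 0,\dots,2^n-1,
\]
so that by (\ref{triangle-pvar}), $T_n := (\sum_j (H_j^{(n)})^p)^{1/p} = c\,h_n^{\alpha-1/p}$. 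Put $g := \sum_n F_n$. The within-block choice $H_j^{(n)} \propto \lambda_{2^n+j}^{-p'/p}$ equalizes the inner H\"older inequality, while $\varepsilon_n \propto u_n^{r'}$ is the Lagrange extremizer for $\sum_n \varepsilon_n^{\alpha-1/p} u_n$ under $\sum_n \varepsilon_n \leq 1$, exploiting the identity $(\alpha - 1/p)r' = r' - 1$.

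For (\ref{lipConveq2}), list the ascending half of every triangle block-by-block, enumerating within block $n$ by $j = 0, \ldots, 2^n - 1$; the global position is $2^n - 1 + j$, and monotonicity of $\lambda$ gives $\lambda_{2^n-1+j} \leq \lambda_{2^n+j}$. Using $p'/p+1=p'$ and $\tilde\varepsilon_n \geq \varepsilon_n$,
\[
v_\Lambda(g) \geq \sum_n \sum_{j=0}^{2^n-1} \frac{H_j^{(n)}}{\lambda_{2^n-1+j}} \geq \sum_n A_n S_n = c\sum_n h_n^{\alpha-1/p} S_n^{1/p'} = c\sum_n \tilde\varepsilon_n^{\alpha-1/p} u_n \geq c\,\|u\|_{r'},
\]
by the Lagrange identity $\sum_n \varepsilon_n^{\alpha-1/p} u_n = \|u\|_{r'}$. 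For (\ref{lipConveq1}), the disjoint supports together with the block estimates $\omega_{1-1/p}(F_n;\delta) \leq 2^{1/p}T_n$ (for $\delta \geq h_n$) and $\omega_{1-1/p}(F_n;\delta) \leq CT_n(\delta/h_n)^{1/p'}$ (for $\delta < h_n$) yield
\[
\omega_{1-1/p}(g;\delta)^p \lesssim c^p\Bigl(\sum_{h_n \leq \delta} h_n^{(\alpha-1/p)p} + \delta^{p-1}\sum_{h_n > \delta} h_n^{(\alpha-1/p)p-(p-1)}\Bigr).
\]
The main obstacle is to bound each of the two sums by $C\delta^{(\alpha-1/p)p}$ uniformly in $\delta$; this is precisely where the geometric regularity of $\{h_n\}$ inherited from Lemma~\ref{ViktorJavierLemma} enters, letting both sums telescope to the expected order.
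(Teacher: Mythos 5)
Your construction is essentially the paper's own proof: the same triangular blocks from Definition~\ref{constrDef} with heights proportional to $\l_k^{-p'/p}$, the same regularization of the block lengths via Lemma~\ref{ViktorJavierLemma}, and the same two-regime estimate of $\o_{1-1/p}(g;\d)$ (derivative bound for $h_n>\d$, $p$-variation bound for $h_n\le\d$), the only cosmetic difference being that you substitute the explicit dual extremizer $\e_n\propto u_n^{r'}$ at the outset whereas the paper keeps $\{\d_n\}$ generic and invokes Lemma~\ref{dualLemma} at the end. The telescoping you defer does go through exactly as in the paper, but be sure to run Lemma~\ref{ViktorJavierLemma} with $\theta<2$ so that $h_n=\tilde\e_n/2^n$ is genuinely geometrically decreasing, and note that the plain triangle inequality $\o_{1-1/p}(g;\d)\le\sum_n\o_{1-1/p}(F_n;\d)$ already suffices, sparing you the need to justify the $\ell^p$-superadditivity implicit in your displayed bound for $\o_{1-1/p}(g;\d)^p$.
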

\begin{proof}
Let $\{\d_n\}\in l^1$ be a fixed but arbitrary positive sequence with $\|\{\d_n\}\|_{l^1}\le1$. Applying Lemma \ref{ViktorJavierLemma} with $\g=1$ and $\theta=3/2$ (the value of $\g$ does not matter, it is only important that $1<\theta<2$) to the sequence $\{\d_n\}$, we obtain a positive sequence $\{\b_n\}$ such that $\d_n\le\b_n~~(n\in\N)$, 
\begin{equation}
\label{Lestim}
\frac{2}{3}<\frac{\b_{n+1}}{\b_n}\le\frac{3}{2}\quad(n\in\N)\quad{\rm and}\quad L=\|\{\b_n\}\|_{l^1}\le9.
\end{equation}
Subdivide the interval $[0,1]$ into non-overlapping intervals $J_n~~(n\in\N)$ with $|J_n|=\b_n/L$. For $n\in\N$, denote
$$
S_n=\left(\sum_{k=2^n}^{2^{n+1}-1}\left(\frac{1}{\l_k}\right)^{p'}\right)^{1/p'},
$$
and
$$
H_k^{(n)}=(2^{-n}\b_n)^{\a-1/p}\l_k^{-1/(p-1)}S_n^{-p'/p}\quad{\rm for}\quad 2^{n}\le k\le2^{n+1}-1.
$$
Let also ${\bf H}_n=(H_{2^n}^{(n)},H_{2^{n}+1}^{(n)},...,H_{2^{n+1}-1}^{(n)})\in \R^{2^n}$. Put $F_n(x)=F(J_n,2^n,{\bf H}_n;x)$ (see Definition \ref{constrDef}), and
$$
g(x)=\sum_{n=1}^\infty F_n(x).
$$
It is clear that
\begin{equation}
\label{LambdaVar1}
v_\Lambda(g)\ge2\sum_{n=1}^\infty\sum_{k=2^n}^{2^{n+1}-1}\frac{H_k^{(n)}}{\l_k}.
\end{equation}
On the other hand, 
\begin{equation}
\nonumber
\sum_{k=2^n}^{2^{n+1}-1}\frac{H_k^{(n)}}{\l_k}=(2^{-n}\b_n)^{\a-1/p}S_n.
\end{equation}
Thus, since $\d_k\le\b_k$ for $k\in\N$, we have
\begin{eqnarray}
\nonumber
\sum_{n=1}^\infty\sum_{k=2^n}^{2^{n+1}-1}\frac{H_k^{(n)}}{\l_k}&=&\sum_{n=1}^\infty\b_n^{\a-1/p}\left(2^{-np'(\a-1/p)}\sum_{k=2^n}^{2^{n+1}-1}\left(\frac{1}{\l_k}\right)^{p'}\right)^{1/p'}\\
\nonumber
&\ge&2^{-1/p+\a}\sum_{n=1}^\infty\d_n^{\a-1/p}\left(\sum_{k=2^n}^{2^{n+1}-1}\left(\frac{1}{k^{\a-1/p}\l_k}\right)^{p'}\right)^{1/p'}.
\end{eqnarray}
By the previous inequality and (\ref{LambdaVar1}),
\begin{equation}
\label{Paper4LambdaVarEstim}
v_\Lambda(g)\ge2^{-1/p+\a}\sum_{n=1}^\infty\d_n^{\a-1/p}\left(\sum_{k=2^n}^{2^{n+1}}\left(\frac{1}{k^{\a-1/p}\l_k}\right)^{p'}\right)^{1/p'}.
\end{equation}

We proceed to estimate $\o_{1-1/p}(g;\d)$. By the first relation of (\ref{Lestim}), we have
$$
\frac{1}{3}\le\frac{2^{-n-1}\b_{n+1}}{2^{-n}\b_n}\le\frac{3}{4}<1.
$$
In particular, the sequence $\{2^{-n}\b_n\}$ is strictly decreasing and  $2^{-n}\b_n\rightarrow0$ as $n\rightarrow\infty$. Fix $0<\d\le1$.
If $\d>2^{-1}\b_1$, then we set $m=0$. Otherwise, define $m\in\N$ to be the unique natural number such that
$$
2^{-m-1}\b_{m+1}<\d\le 2^{-m}\b_m.
$$
By (\ref{Paper4ModIneq}), we have
\begin{equation}
\label{Paper4ModIneq2}
\o_{1-1/p}(g;\d)\le\d^{1/p'}\sum_{n=1}^m\|F'_n\|_p+\sum_{n=m+1}^\infty v_p(F_n).
\end{equation}
(The first sum is taken as zero if $m=0$).
We shall estimate the terms at the right-hand side of (\ref{Paper4ModIneq2}).
It follows from (\ref{triangle-pvar}) and the definition of $H_k^{(n)}$ that
\begin{equation}
\label{hkeq2}
v_p(F_n)=2^{1/p}\left(\sum_{k=2^n}^{2^{n+1}-1}(H_k^{(n)})^p\right)^{1/p}=2^{1/p}(2^{-n}\b_n)^{\a-1/p}.
\end{equation}
Further, by (\ref{triangle-pnorm}),
\begin{eqnarray}
\nonumber
\|F_n'\|_p&=&2\left(\frac{\b_n/L}{2^n}\right)^{-1/p'}\left(\sum_{k=2^n}^{2^{n+1}-1}(H_k^{(n)})^p\right)^{1/p}\\
\label{hkeq3}
&=&2L^{1/p'}(2^{-n}\b_n)^{\a-1}.
\end{eqnarray}
By the estimate $L\le9$, (\ref{Paper4ModIneq2}), (\ref{hkeq2}) and (\ref{hkeq3}),
\begin{eqnarray}
\nonumber
\lefteqn{\o_{1-1/p}(g;\d)\le}\\
&&\le 18\d^{1/p'}\sum_{n=1}^m(2^{-n}\b_n)^{\a-1}+2^{1/p}\sum_{n=m+1}^\infty(2^{-n}\b_n)^{\a-1/p}.
\end{eqnarray}
Since
$$
\left(\frac{2^{-n+1}\b_{n-1}}{2^{-n}\b_n}\right)^{\a-1}=\left(\frac{2\b_{n-1}}{\b_n}\right)^{\a-1}=\left(\frac{\b_n}{2\b_{n-1}}\right)^{1-\a}\le\left(\frac{3}{4}\right)^{1-\a}<1,
$$
we get
\begin{eqnarray}
\nonumber
\sum_{n=1}^m(2^{-n}\b_n)^{\a-1}&\le&(2^{-m}\b_m)^{\a-1}\sum_{n=0}^\infty\left(\frac{3}{4}\right)^{n(1-\a)}=c_\a(2^{-m}\b_m)^{\a-1}\\
\label{Paper4Lip1}
&\le&c_\a\d^{\a-1}.
\end{eqnarray}
Similarly, 
\begin{eqnarray}
\nonumber
\sum_{n=m+1}^\infty(2^{-n}\b_n)^{\a-1/p}&\le& (2^{-m-1}\b_{m+1})^{\a-1/p}\sum_{n=0}^\infty\left(\frac{3}{4}\right)^{n(\a-1/p)}\\
\label{Paper4Lip2}
&\le&c_{p,\a}\d^{\a-1/p}.
\end{eqnarray}
Thus, by (\ref{Paper4ModIneq2}), (\ref{Paper4Lip1}) and (\ref{Paper4Lip2}),
\begin{equation}
\label{Paper4ModIneq3}
\o_{1-1/p}(g;\d)\le c'_{p,\a}\d^{\a-1/p}\quad(0<\d\le1).
\end{equation}
Denote
$$
L_n=\left(\sum_{k=2^n}^{2^{n+1}}\left(\frac{1}{k^{\a-1/p}\l_k}\right)^{p'}\right)^{1/p'}.
$$
Clearly, $\{\d_n\}\in l^1$ is equivalent to $\{\d_n^{\a-1/p}\}\in l^r$.
By Lemma \ref{dualLemma}, we can choose $\{\d_n\}\in l^1$ such that
\begin{equation}
\label{duality1}
\sum_{n=1}^\infty\d_n^{\a-1/p}L_n\ge\frac{1}{2}\left(\sum_{n=1}^\infty L_n^{r'}\right)^{1/r'}.
\end{equation}
If $\{L_n\}\notin l^{r'}$, then we must interpret (\ref{duality1}) in the sense that we may choose $\{\d_n\}\in l^1$ such that the left-hand side of (\ref{duality1}) is infinite. In any case, the function $g$ constructed above with this choice of $\{\d_n\}$ satisfies (\ref{lipConveq1}) and (\ref{lipConveq2}), by (\ref{Paper4ModIneq3}), (\ref{Paper4LambdaVarEstim}) and (\ref{duality1}).
\end{proof}
Clearly, it follows from Theorem \ref{Paper4LipschitzSharp} that the conjecture of Wang is not true (that is, (\ref{Paper4LipschitzWrong}) is not sufficient for the embedding (\ref{Paper4Lipschitz})).

Combining Theorems \ref{Paper4MainTeoLip} and \ref{Paper4LipschitzSharp}, we obtain Theorem \ref{MainTeoIntroduction}.

\begin{rem}
For $1\le p<\infty,~~\a=1$, we have ${\rm Lip}(1;p)=W_p^1$. It is easy to show that the embedding $W_p^1\subset\Lambda BV$ holds for all sequences $\Lambda\in\mathcal{S}$.
\end{rem}

\section{A Perlman-type theorem}
As mentioned in the introduction, Perlman \cite{Pe1} showed that
\begin{equation}
\nonumber
V_1=\bigcap_{\Lambda\in\mathcal{S}}\Lambda BV.
\end{equation}

Recall that $\mathcal{S}_{p'}$ denotes the class of all sequences $\Lambda=\{\l_n\}\in\mathcal{S}$ such that
$$
\sum_{n=1}^\infty\left(\frac{1}{\l_n}\right)^{p'}<\infty.
$$
In this section, we prove the following complement to Perlman's theorem.
\begin{teo}
\label{perlmanTeo}
Let $1<p<\infty$. Then
\begin{equation}
\nonumber
V_p=\bigcap_{\Lambda\in\mathcal{S}_{p'}}\Lambda BV.
\end{equation}
\end{teo}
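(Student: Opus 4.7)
The inclusion $V_p \subset \bigcap_{\Lambda\in\mathcal{S}_{p'}}\Lambda BV$ is the easy half; I would dispatch it directly by H\"{o}lder's inequality, which gives
$$
\sum_n \frac{|f(I_n)|}{\l_n} \le v_p(f;\mathcal{I})\,\Bigl(\sum_n \l_n^{-p'}\Bigr)^{1/p'} \le v_p(f)\,\|\{1/\l_n\}\|_{p'}
$$
for any $\Lambda\in\mathcal{S}_{p'}$, any nonoverlapping sequence $\mathcal{I}=\{I_n\}$, and any $f\in V_p$.

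For the reverse inclusion I would argue contrapositively: given $f\notin V_p$, produce $\Lambda\in\mathcal{S}_{p'}$ with $v_\Lambda(f)=\infty$. First choose finite sequences $\mathcal{I}^{(k)}=\{I_j^{(k)}\}_{j=1}^{N_k}$ of nonoverlapping intervals such that $M_k:=v_p(f;\mathcal{I}^{(k)})\ge 2^k$, relabel so that $a_j^{(k)}:=|f(I_j^{(k)})|$ is positive and nonincreasing in $j$, and pass to a subsequence so that $N_k\to\infty$. The equality case of H\"{o}lder's inequality (of which Lemma \ref{dualLemma} is the dual formulation) furnishes the natural test sequences
$$
\m_j^{(k)}:=\bigl(a_j^{(k)}/M_k\bigr)^{p-1}\ (1\le j\le N_k),\qquad \m_j^{(k)}:=0\ (j>N_k),
$$
which are nonincreasing in $j$, satisfy $\|\{\m_j^{(k)}\}\|_{p'}=1$ (since $(p-1)p'=p$), and pair with $a^{(k)}$ to give $\sum_j a_j^{(k)}\m_j^{(k)}=M_k$.

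The central step is to amalgamate the $\m^{(k)}$ into a single nonincreasing sequence satisfying the defining conditions of $\mathcal{S}_{p'}$. Setting
$$
\m_j:=\sum_{k=1}^\infty k^{-2}\,\m_j^{(k)},
$$
I get a termwise sum of nonincreasing sequences, hence nonincreasing in $j$, strictly positive since $N_k\to\infty$, and with $\|\{\m_j\}\|_{p'}\le\sum_k k^{-2}<\infty$ by Minkowski's inequality in $l^{p'}$. Defining $\l_n:=1/\m_n$ and applying each collection $\mathcal{I}^{(k)}$, ordered decreasingly in $|f|$, gives
$$
v_\Lambda(f)\ge\sum_{j=1}^{N_k} a_j^{(k)}\,\m_j\ge k^{-2}\sum_{j=1}^{N_k} a_j^{(k)}\,\m_j^{(k)}=k^{-2}M_k\ge\frac{2^k}{k^2}\longrightarrow\infty.
$$
Note that $\m_j\to 0$ (nonincreasing $l^{p'}$ sequences necessarily vanish) so $\l_n\to\infty$.

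The main obstacle is that membership in $\mathcal{S}$ requires also $\sum 1/\l_n=\sum\m_n=\infty$, and the construction above does not guarantee this (for instance, each $\m^{(k)}$ may be concentrated on very few large jumps). I would remedy this cheaply by adding a fixed nonincreasing corrector $\e_j\in l^{p'}\setminus l^1$, such as $\e_j=j^{-1/p'}(\log(j+1))^{-1}$ for $j\ge 1$, and replacing $\m_j$ by $\m_j+\e_j$. Minkowski preserves both monotonicity and $l^{p'}$-summability, the divergence $\sum(\m_j+\e_j)=\infty$ is forced by the corrector, and the lower bound on $v_\Lambda(f)$ is only strengthened. Hence $\Lambda:=\{1/(\m_j+\e_j)\}$ lies in $\mathcal{S}_{p'}$ while $f\notin\Lambda BV$, completing the proof.
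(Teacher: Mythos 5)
Your proof is correct, and its first half coincides with the paper's (the same one-line application of H\"{o}lder's inequality). For the reverse inclusion you take a genuinely different route. The paper fixes a single infinite family $\{J_n\}$ with $\sum|f(J_n)|^p=\infty$, invokes Lemma \ref{dualLemma} once to produce $\{\alpha_n\}\in l^{p'}$ with $\sum\alpha_n|f(J_n)|=\infty$, and sets $\lambda_n=1/\alpha_n^*$ after a monotone rearrangement; the divergence $\sum 1/\lambda_n=\infty$ then comes for free from the boundedness of $f$, via $\alpha_n^*\ge \alpha_n^*|f(J_n)|/(2\|f\|_\infty)$. You instead write down the extremal dual vectors of H\"{o}lder's inequality explicitly for a sequence of finite families, amalgamate them with weights $k^{-2}$, and must then repair the condition $\sum 1/\lambda_n=\infty$ by hand with the corrector $\varepsilon_j$ --- a step the paper's argument sidesteps entirely. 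Your version is more constructive and avoids both the abstract duality statement and the rearrangement inequality, at the price of the amalgamation and correction steps. Two small points: the clause ``pass to a subsequence so that $N_k\to\infty$'' is not justified as written (bounded $N_k$ with $M_k\to\infty$ forces $f$ unbounded, a case you never address), but it is also unnecessary, since once $\varepsilon_j$ is added the positivity of $\mu_j+\varepsilon_j$ is automatic; and, like the paper, you should remark at the outset that an unbounded $f$ belongs to no $\Lambda BV$ class, so the reverse inclusion is trivial for such $f$ --- although in fact your construction happens to survive without that reduction.
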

\begin{proof}
Let $f$ be a given function and $\{I_n\}$ an arbitrary sequence of nonoverlapping intervals contained in a period. Applying H\"{o}lder's inequality, we have
$$
\sum_{n=1}^\infty\frac{|f(I_n)|}{\l_n}\le v_p(f)\left(\sum_{n=1}^\infty\left(\frac{1}{\l_n}\right)^{p'}\right)^{1/p'}.
$$
Thus, if $\Lambda\in\mathcal{S}_{p'}$, then $V_p\subset\Lambda BV$. Whence,
$$
V_p\subset\bigcap_{\Lambda\in\mathcal{S}_{p'}}\Lambda BV.
$$
Let now $f$ be a bounded function with $f\notin V_p$. Then there exists a sequence $\{J_n\}$ of nonoverlapping intervals contained in a period such that
$$
\sum_{n=1}^\infty|f(J_n)|^p=\infty.
$$
Since $\{|f(J_n)|\}\notin l^p$, there exists $\{\a_n\}\in l^{p'}$ such that 
$$
\sum_{n=1}^\infty\a_n|f(J_n)|=\infty,
$$ 
by Lemma \ref{dualLemma}. We may assume that $\a_n>0$ for all $n\in\N$ and that $\{|f(J_n)|\}$ is ordered nonincreasingly. Let $\{\a_n^*\}$ be the nonincreasing rearrangement of $\{\a_n\}$, set $\l_n=1/\a_n^*$ and $\Lambda=\{\l_n\}$. Since $\{|f(J_n)|\}$ is nonincreasing, we have
\begin{equation}
\label{perlman3}
\sum_{n=1}^\infty\frac{|f(J_n)|}{\l_n}=\sum_{n=1}^\infty\a_n^*|f(J_n)|\ge\sum_{n=1}^\infty\a_n|f(J_n)|=\infty,
\end{equation}
whence $f\notin \Lambda BV$. It remains to show that $\Lambda\in\mathcal{S}_{p'}$. Clearly $\Lambda$ is a positive and nondecreasing sequence. Moreover, $|f(I)|\le 2\|f\|_\infty$ for any interval. Therefore,
$$
\sum_{n=1}^\infty\frac{1}{\l_n}\ge\frac{1}{2\|f\|_\infty}\sum_{n=1}\frac{|f(J_n)|}{\l_n}=\infty,
$$
by (\ref{perlman3}). Whence, $\Lambda\in\mathcal{S}$. Furthermore, since $\{\a_n\}\in l^{p'}$,
$$
\sum_{n=1}^\infty\left(\frac{1}{\l_n}\right)^{p'}=\sum_{n=1}^\infty(\a_n^*)^{p'}=\sum_{n=1}^\infty\a_n^{p'}<\infty.
$$
Thus, $\{\l_n\}\in\mathcal{S}_{p'}$.
\end{proof}

\begin{rem}
As a generalization of the class $V_p$, L.C. Young \cite{Yo} introduced the class $V_\Phi$ of functions of bounded $\Phi$-variation.
A result similar to Theorem \ref{perlmanTeo} can be proved also for $V_\Phi$.
\end{rem}

\begin{rem}
We can apply Theorem \ref{perlmanTeo} to prove that there is a sequence $\Lambda\in\mathcal{S}$ that satisfies (\ref{Paper4LipschitzWrong}) but still ${\rm Lip}(\a;p)\not\subset\Lambda BV$.

Note first that $1<1/\a<p<\infty$. It was proved in \cite{KoLi1} that there exists a function $f$ such that $\o_{1-1/p}(f;\d)=O(\d^{\a-1/p})$, and at the same time $f\notin V_{1/\a}$.
In light of (\ref{Paper4LipschitzEquivalence2}), this means exactly that there is a function $f\in{\rm Lip}(\a;p)$ such that $f\notin V_{1/a}$.
Theorem \ref{perlmanTeo} states that
\begin{equation}
\label{perlman22}
V_{1/\a}=\bigcap_{\Lambda\in\mathcal{S}_{1/(1-\a)}}\Lambda BV.
\end{equation}
Observe that $\mathcal{S}_{1/(1-\a)}$ is the collection of all sequences in $\mathcal{S}$ that satisfies (\ref{Paper4LipschitzWrong}). 
Since $f\notin V_{1/\a}$, (\ref{perlman22}) implies that for some $\Lambda\in\mathcal{S}_{1/(1-\a)}$, we have $f\notin\Lambda BV$. But since $f\in{\rm Lip}(\a;p)$, we have shown that there exists a $\Lambda$ that satisfies (\ref{Paper4LipschitzWrong}) while the embedding (\ref{Paper4Lipschitz}) does not hold.
\end{rem}

{\sc Acknowledgements.} The author is very grateful to Professor Viktor I. Kolyada, under whose guidance this work was completed.

\end{document}